\documentclass[11pt, a4paper]{article}
\usepackage[utf8]{inputenc}
\usepackage{amsfonts, amssymb, amsmath, amsthm}
\usepackage{mathtools}
\usepackage{enumerate}
\usepackage{graphicx}
\usepackage{fullpage}
\usepackage{verbatim}
\usepackage{hyperref}
\usepackage[normalem]{ulem} % Crossing out an older text
\usepackage{authblk}
\hypersetup{colorlinks=true, linkcolor=red, urlcolor=blue, citecolor=blue, pdfpagemode=UseNone, pdfstartview=FitH, bookmarksopen=true}
\hypersetup{pdftitle=Title}

\newtheorem*{theorem*}{Theorem}
\newtheorem{theorem}{Theorem}

\newtheorem*{lemma*}{Lemma}
\newtheorem{lemma}[theorem]{Lemma}

\newtheorem*{proposition*}{Proposition}

\newtheorem*{fact*}{Fact}

\newtheorem*{question*}{Question}
\newtheorem{question}[theorem]{Question}
\newtheorem{conjecture}[theorem]{Conjecture}
\newtheorem*{corollary*}{Corollary}
\newtheorem{corollary}[theorem]{Corollary}
\newtheorem*{claim*}{Claim}

\theoremstyle{remark}
\newtheorem*{remark*}{Remark}
\newtheorem{remark}[theorem]{Remark}
\newtheorem{example}[theorem]{Example}

\theoremstyle{definition}
\newtheorem*{definition*}{Definition}

\newtheorem*{observation*}{Observation}

\newcommand{\Z}{\mathbb{Z}}

\newcommand{\CC}{\mathcal{C}}
\newcommand{\HH}{\mathcal{H}}

\newcommand{\RR}{\mathcal{R}}
\renewcommand{\SS}{\mathcal{S}}
\newcommand{\UU}{\mathcal{U}}
\newcommand{\XX}{\mathcal{X}}

\DeclareMathOperator{\cone}{cone}

\DeclareMathOperator{\rk}{rk}

\DeclareMathOperator{\val}{val}

\DeclareMathOperator{\evec}{\bf e}

\title{Maximal matroids and counterexamples\thanks{D. B. is supported by AARMS postdoctoral fellowship. M. T. is supported by the
GA\v{C}R grant 25-16847S.}}
\author[1]{Denys Bulavka}
\author[2]{Martin Tancer}

\affil[1]{\small Department of Mathematics and Statistics, Dalhousie University, Halifax, Nova Scotia, Canada.}
\affil[2]{\small Department of Applied Mathematics, Faculty of Mathematics and Physics,
Charles~University, Prague, Czech Republic.}
\date{}

\begin{document}

\maketitle{}

\begin{abstract}
  Jackson and Tanigawa conjectured that the rigidity matroid $\RR^d_n$ and the hyperconnectivity matroid $\HH^d_n$ are the unique maximal
  matroids in the posets of $\{K_{d+2}, K_{d+2,d+2}\}$-matroids and $\{K_{d+2},K_{d+1,d+1}\}$-matroids, respectively. We disprove these conjectures
  by showing the existence of maximal matroids that are distinct from the proposed candidates.
\end{abstract}
  
\section{Introduction}

In a recent paper~\cite{jackson24-maximal}, Jackson and Tanigawa initiated a
systematic study of maximal matroids with prescribed sets of circuits.
They conjectured that certain well known matroids may be described as unique
maximal matroids among those with a particularly simple
prescribed set of circuits.

When trying to prove some of these conjectures, we actually arrived
at counterexamples, which is the content of this article.

\paragraph{An upper bound on the rank function of a matroid.}
Following~\cite{jackson24-maximal}, let $\XX$ be a family of subsets from a
finite set $E$. An \emph{$\XX$-matroid} on $E$ is a matroid whose ground set is $E$ and each
set in $\XX$ is a circuit. We consider matroids on $E$ ordered with respect to
inclusion. (Here and throughout the article we identify a matroid $M$ with the collection of its
independent sets.) Given $\XX$ as above, a \emph{proper $\XX$-sequence} is a sequence
$\SS = (X_1, \dots, X_k)$ of sets in $\XX$ such that, for every $i \in \{2,\dots, k\}$, $X_i$ is not a subset of
$X_1 \cup \cdots \cup X_{i-1}$. For $F
\subseteq E$, set $\val(F, \SS) := |F \cup X_1 \cup \cdots \cup X_k| - k$. Then
$\val_\XX \colon 2^E \to \Z$ is defined as 
\[
  \val_\XX(F) = \min \{ \val(F, \SS)\colon \SS \hbox{ is a proper
  $\XX$-sequence}\}.
\]

For any proper $\XX$-sequence $\SS$, the value $\val(F,\SS)$ is an upper bound on the rank of $F$ in every
$\XX$-matroid:

\begin{lemma}[{\cite[Part~of~Lemma 3.3]{clinch22-rigidity2}}]
\label{l:lower-bound}
  Let $M$ be an $\XX$-matroid on $E$ and $F\subseteq E$. Then $\rk_M(F)\leq
  \val(F,\SS)$ for any proper $\XX$-sequence $\SS$. In particular, this implies that
  $\rk_M(F) \leq \val_\XX(F)$.
\end{lemma}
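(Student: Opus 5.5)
We argue by induction on the length $k$ of the proper $\XX$-sequence $\SS = (X_1,\dots,X_k)$, proving the stronger statement
\[
  \rk_M(F \cup X_1 \cup \cdots \cup X_k) \le |F \cup X_1 \cup \cdots \cup X_k| - k .
\]
This suffices because rank is monotone: $\rk_M(F) \le \rk_M(F \cup X_1 \cup \cdots \cup X_k) \le \val(F,\SS)$. Taking the minimum over all proper $\XX$-sequences $\SS$ then gives $\rk_M(F) \le \val_\XX(F)$. For $k=0$ the claim is the trivial bound $\rk_M(G) \le |G|$.

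For the inductive step, set $U = F \cup X_1 \cup \cdots \cup X_{k-1}$. By induction, $\rk_M(U) \le |U| - (k-1)$. Properness says $X_k \not\subseteq U$, so the set $D := X_k \setminus U$ is nonempty. We claim
\[
  \rk_M(U \cup X_k) \le \rk_M(U) + |D| - 1 .
\]
Adding $|U \cup X_k| = |U| + |D|$ and the inductive bound then gives $\rk_M(U \cup X_k) \le |U \cup X_k| - k$, as required.

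To prove the claim, pick any $e \in D$. Since $X_k$ is a circuit of $M$, we have $e \in \operatorname{cl}_M(X_k \setminus \{e\})$, and hence $e \in \operatorname{cl}_M\bigl(U \cup (D \setminus \{e\})\bigr)$. Therefore
\[
  \rk_M(U \cup X_k) = \rk_M\bigl(U \cup (D \setminus \{e\})\bigr) \le \rk_M(U) + |D| - 1,
\]
where the last step uses subadditivity (adding each element raises rank by at most one). The only real obstacle is to strengthen the induction hypothesis to the whole union $F \cup X_1 \cup \cdots \cup X_k$, rather than to $F$ alone; once this is done, each step is this single circuit-closure observation, and nonemptiness of $D$ is exactly what properness guarantees.
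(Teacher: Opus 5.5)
Your inductive step has a genuine gap at the sentence ``Properness says $X_k \not\subseteq U$''. In this paper, properness of $\SS=(X_1,\dots,X_k)$ does not involve $F$ at all. It only requires $X_i \not\subseteq X_1\cup\cdots\cup X_{i-1}$ for $i\geq 2$, and imposes nothing on $X_1$. Your set $U = F\cup X_1\cup\cdots\cup X_{k-1}$ contains $F$, so $D = X_k\setminus U$ can be empty. For instance, take $k=1$ and $F\supseteq X_1$, or simply $F=E$. In that case there is no $e\in D$ to choose, and your claimed inequality $\rk_M(U\cup X_k)\le \rk_M(U)+|D|-1$ reads $\rk_M(U)\le \rk_M(U)-1$, which is false. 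The induction hypothesis gives only $|U|-(k-1)$, one short of what you need. The strengthened statement you induct on is true, but your argument does not establish it when $F$ already contains all the new elements of $X_k$. Your closing remark that ``nonemptiness of $D$ is exactly what properness guarantees'' is precisely where this goes wrong.

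Either of two small repairs works.
\begin{enumerate}
\item Run your circuit-closure argument with $F=\emptyset$. There properness really does give $D\neq\emptyset$; for $k=1$ you have $D=X_1$, which is nonempty because circuits are nonempty. This yields $|W|-\rk_M(W)\geq k$ for $W=X_1\cup\cdots\cup X_k$. Then use that the nullity $|A|-\rk_M(A)$ is monotone under inclusion, since $\rk_M(B)\le \rk_M(A)+|B\setminus A|$ for $A\subseteq B$. This gives $\rk_M(F)\le \rk_M(F\cup W)\le |F\cup W|-k$.
\item Keep your induction over all $F$, but in the inductive step first pick $e\in X_k\setminus(X_1\cup\cdots\cup X_{k-1})$. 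Apply the induction hypothesis to $F\setminus\{e\}$ instead of $F$. Then $e\notin U' := (F\setminus\{e\})\cup X_1\cup\cdots\cup X_{k-1}$, and $U'\cup X_k = F\cup X_1\cup\cdots\cup X_k$. Your closure estimate now goes through verbatim with $U'$ in place of $U$.
\end{enumerate}
For the comparison you were asked about: the paper gives no proof of this lemma and only cites it from \cite{clinch22-rigidity2}. There is no in-paper argument to compare your route against; the point is just that yours needs one of the fixes above.
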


Jackson and Tanigawa~\cite[Lemma~1.2]{jackson24-maximal} further showed that if
$\val_\XX$ is submodular and there exists some $\XX$-matroid, then there is a
unique maximal $\XX$-matroid with its rank function coinciding with $\val_\XX$.
The function $\val_\XX$ is interesting because it could describe the rank
function of various well known matroids as explained below.

\paragraph{Circuits in specific abstract rigidity matroids.} 

From now on let us assume that $E$ is the collection of edges of the complete graph
$K_n$, for some $n$. Given a collection of graphs $G_1, \dots, G_k$, with a
slight abuse of the notation, by setting $\XX = \{G_1, \dots, G_k\}$ we actually
mean that $\XX$ consists of all subgraphs of $K_n$ isomorphic to one of the
graphs $G_1, \dots, G_k$. 

Recently Clinch, Jackson and Tanigawa~\cite{clinch22-rigidity1,clinch22-rigidity2}
showed that the (generic) cofactor matroid\footnote{This matroid is denoted as
$\CC^1_2(K_n)$ in~\cite{clinch22-rigidity1}.} $\CC^3_n$ is the unique maximal $K_5$-matroid and
$\val_{K_5}$ is its rank function. Jackson and Tanigawa~\cite{jackson24-maximal} studied the existence of
maximal $\XX$-matroids for several classes $\XX = \{G_1, \dots,
G_k\}$ as above. They conjectured that various specific abstract rigidity
matroids~\cite{graver91-abstractrig} can be described as maximal $\XX$-matroids for a particularly simple family $\XX$
independent of $n$. In order to state these conjectures we recall that  
$\HH^d_n$ stands for the \emph{(generic) $d$-hyper\-connectivity matroid}\footnote{In Kalai's
paper this matroid is denoted $\HH_d^n$ but we prefer to be consistent
with~\cite{jackson24-maximal}.} on $n$ vertices introduced by
Kalai~\cite{kalai85-hyperconnectivity}, $\RR^d_n$ stands for the \emph{(generic) $d$-rigidity
matroid}\footnote{As far as we understand the notation
in~\cite{jackson24-maximal}, this matroid is denoted either $\RR_d(K_n)$ or
$\RR_n^d(K_n)$.} on $n$ vertices~\cite{maxwell64-rigidity,gluck74-rigidity,asimow78-rigidity,asimow79-rigidity},
and $\CC^d_n$ stands for the \emph{(generic) $d$-cofactor matroid}\footnote{This matroid is usually denoted
as $\CC^{d-2}_{d-1}(K_n)$~\cite{whiteley96-geometry,jackson24-maximal}.} on $n$ vertices ~\cite{whiteley88-cofactor,
  whiteley91-cofactor, alfeld93-cofactor, whiteley96-geometry}.  

\begin{conjecture}{\cite[Conjecture 6.6.a]{jackson24-maximal}}
\label{c:hyperconnectivity}
  Let $d\geq 1$ and $\XX=\{K_{d+2},K_{d+1,d+1}\}$. Then, $\HH_n^d$ is the
  unique maximal $\XX$-matroid and $\val_\XX$ is its rank function.
\end{conjecture}

We remark that $n$ is not really quantified in~\cite[Conjecture
6.6.a]{jackson24-maximal}; however, it is meaningful to assume $n \geq 2d + 2$,
because then $K_n$ contains a copy of $K_{d+1,d+1}$.
The conjecture is true for $d=1$ since in this case the hyperconnectivity matroid
$\HH_n^1$ coincides with the cycle matroid, which is also a count matroid.
In this case, $\HH_n^1$ is the unique maximal $\{K_3,K_{2,2}\}$-matroid, in particular, because it is the unique maximal
$K_3$-matroid~\cite[Theorem 4.3]{jackson24-maximal}, and $\val_{K_3}$ is its rank function.
Although the conjecture is open for $\HH_n^2$, Bernstein provided a combinatorial characterization of its independent sets~\cite{bernstein17-2hyper}.
Here we disprove Conjecture~\ref{c:hyperconnectivity} for $d\geq 3$:

\begin{theorem}
  \label{t:rank-hd}
  Let $d\geq 3$, $n\geq 2d+2$ and $\XX = \{K_{d+2}, K_{d+1,d+1}\}$. Then, $\HH_n^d$ is not the unique
  maximal $\XX$-matroid. In particular, $\val_\XX$ is not the rank function of $\HH_n^d$.
\end{theorem}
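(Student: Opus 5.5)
The plan is to exhibit a single $\XX$-matroid $M$ on $E(K_n)$ together with an edge set $F$ that is independent in $M$ but dependent in $\HH^d_n$. This suffices. Since $M\not\subseteq\HH^d_n$, either $\HH^d_n$ is not maximal, or some maximal $\XX$-matroid containing $M$ differs from it. In both cases $\HH^d_n$ is not the unique maximal $\XX$-matroid. Moreover, Lemma~\ref{l:lower-bound} applied to $M$ gives $\val_\XX(F)\ge \rk_M(F)=|F|>\rk_{\HH^d_n}(F)$, which proves the ``in particular'' part. Because $n\ge 2d+2$, both $K_{d+2}$ and $K_{d+1,d+1}$ occur in $K_n$, and $\HH^d_n$ is an $\XX$-matroid (the standard fact underlying Conjecture~\ref{c:hyperconnectivity}). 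It also suffices to work inside a small vertex set and then extend $M$ to all of $K_n$, for instance by a direct sum with $\HH^d$ restricted to the remaining edges. I would check that this extension keeps every copy of $K_{d+2}$ and $K_{d+1,d+1}$ a circuit, possibly by an induction on $n$ that adds one vertex at a time.

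To build $M$, I would start from the generic linear representation of $\HH^d_n$. In this representation each edge $uv$ gets the vector $x_u\otimes e_v - x_v\otimes e_u$, or its equivalent row formulation, with generic $x_u\in\mathbb R^d$. I would then perform an \emph{elementary lift}: append one extra coordinate $f\colon E\to\mathbb R$ to every edge vector. The new matroid $M$ is an $\XX$-matroid provided two things hold for every copy $X$ of $K_{d+2}$ or $K_{d+1,d+1}$. First, the unique linear dependency among the vectors of $X$ is annihilated by $f$, so $X$ stays dependent. Second, $X$ remains a circuit, which is automatic because rank only goes up in a lift. The requirement on $f$ is therefore a system of linear constraints, one per copy of a graph in $\XX$: $f$ must be orthogonal to the circuit vectors of all such copies. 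At the same time, $f$ must not be orthogonal to the circuit vector of some $\HH^d_n$-circuit $F$ that contains no copy of $K_{d+2}$ or $K_{d+1,d+1}$. Then $F$ becomes independent in $M$.

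Equivalently, the key step is to show that for $d\ge3$ the span of the circuit vectors of copies of $K_{d+2}$ and $K_{d+1,d+1}$ is a \emph{proper} subspace of the cycle space of $\HH^d_n$. A dimension or rank count of this kind would do it. The natural candidate for $F$ is a ``non-generated'' circuit of $\HH^d$, for example one on about $d+3$ or $2d+3$ vertices. Such circuits exist for $d\ge3$; this is also why the case $d\le 2$ is excluded. On a small vertex set I would compute, or bound, the dimension of the space spanned by the $\XX$-circuit vectors and compare it with the total dimension $|E|-\rk\HH^d$ of the cycle space. If the former is smaller, a suitable $f$ exists. It can then be taken generic in the annihilator, and $F$ can be any circuit whose vector lies outside the span.

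The main obstacle is exactly this proper-containment statement, and in particular proving it for all $d\ge3$ rather than for one value of $d$. My first attempt would be to find an explicit family of vectors $f$ orthogonal to all $\XX$-circuits. A promising source is the difference between $\HH^d$ and a related matroid that also has $K_{d+2}$ and $K_{d+1,d+1}$ as dependent sets, such as a rigidity- or cofactor-type stress, or a stress of $\HH^{d}$ on a differently embedded configuration. If that fails, I would fall back on the smallest case $n=2d+2$ and count dimensions combinatorially, using the symmetry of the copies of $K_{d+2}$ and $K_{d+1,d+1}$ to control the rank of their circuit vectors.
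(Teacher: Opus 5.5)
Your opening reduction is sound, and getting the ``in particular'' part directly from Lemma~\ref{l:lower-bound} is fine. The construction of $M$ fails, however, because the key step you isolate is false for every $d$ and $n$.

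Fix a set $D_0$ of $d$ vertices and let $B$ be the set of edges of $K_n$ meeting $D_0$. Add the vertices one at a time, each joined to at most $d$ earlier vertices. These are $0$-extensions, which preserve independence in $\HH^d_n$: the new rows, restricted to the block of the new vertex, are $\pm x_w$ for at most $d$ generic vectors $x_w$. So $B$ is independent. For any other edge $ab$, the copy of $K_{d+2}$ on $D_0\cup\{a,b\}$ lies in $B\cup\{ab\}$. Hence $B$ is a basis, and every fundamental circuit with respect to $B$ is a copy of $K_{d+2}$.

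Fundamental circuits of a basis span the space of linear dependencies. So the circuit vectors of copies of $K_{d+2}$ alone already span the whole cycle space. Any $f$ orthogonal to all $\XX$-circuit vectors is then orthogonal to every dependency, i.e.\ $f(e)=\langle w,v_e\rangle$ for some $w$, and the lifted matroid is just $\HH^d_n$ again.

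Equivalently, apply Lemma~\ref{l:lower-bound} to the proper $\XX$-sequence of these $\binom{n-d}{2}$ copies of $K_{d+2}$. It shows every $K_{d+2}$-matroid on $E(K_n)$ has rank at most $\binom n2-\binom{n-d}2=dn-\binom{d+1}2=\rk\HH^d_n$. A nontrivial elementary lift raises the rank by one, so it can never be an $\XX$-matroid.

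Your direct-sum extension also fails. Circuits of a direct sum lie in a single summand, so copies of $K_{d+2}$ straddling the small vertex set and its complement would become independent.

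The root problem is that any lift contains $\HH^d_n$. You are therefore trying to show that $\HH^d_n$ is not even maximal, which is more than the theorem asserts. By the rank bound above, such an $M$ would need the same rank as $\HH^d_n$, so no rank-raising construction can produce it.

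The missing idea is to build an $\XX$-matroid that does \emph{not} contain $\HH^d_n$. The paper takes $N=\CC^d_n\cap\UU_{K_{d+1,d+1}}$:
\begin{itemize}
\item In $\CC^d_n$, $K_{d+2}$ is a circuit while $K_{d+1,d+1}$ is independent.
\item Intersecting with the $\XX$-uniform matroid turns $K_{d+1,d+1}$ into a circuit.
\item $N$ is a matroid because $K_{d+1,d+1}$ plus any edge is independent in $\CC^d_n$ (Lemmas~\ref{l:bipartite_plus_edge} and~\ref{l:intersection}).
\end{itemize}
The witness is $F=\cone(K_{d,d})$:
\begin{itemize}
\item it is a circuit of $\HH^d_n$ by Corollary~\ref{c:cone-hbip};
\item it is independent in $\CC^d_n$ by Lemma~\ref{l:cofactor-cone}, since $K_{d,d}$ is independent in $\CC^{d-1}_n$;
\item it is independent in $\UU_{K_{d+1,d+1}}$ because it has only $(d+1)^2-1$ edges.
\end{itemize}
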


We remark that it is still possible that $\HH_n^d$ could be described as the
unique maximal $\XX$-matroid for some finite set $\XX$ independent of $n$. It
is also possible that there exists a unique maximal $\{K_{d+2},
K_{d+1,d+1}\}$-matroid. Theorem~\ref{t:rank-hd} only implies that if such a
matroid exists, it is not $\HH_n^d$.

\begin{conjecture}{\cite[Conjecture 6.3]{jackson24-maximal}}
  \label{c:rigidity}
  Let $d\geq 3$ and $\XX=\{K_{d+2},K_{d+2,d+2}\}$. Then, $\RR_n^d$ is 
  the unique maximal $\XX$-matroid and $\val_\XX$ is the rank function of $\RR_n^d$.
\end{conjecture}  

We disprove Conjecture~\ref{c:rigidity} for $d\geq 5$ and $n \geq 2d + 4$ (when $K_n$
contains a copy of $K_{d+2,d+2}$):

\begin{theorem}
  \label{t:rank-rd}
  Let $d\geq 5$, $n\geq 2d+4$ and $\XX=\{K_{d+2},K_{d+2,d+2}\}$. Then, $\RR^d_n$ is not the
  unique maximal $\XX$-matroid. In particular, $\val_\XX$ is not the rank function of $\RR^d_n$.
\end{theorem}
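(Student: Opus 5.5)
We will show that some subgraph $F \subseteq K_n$ is dependent in $\RR^d_n$ although $\val_\XX(F) = |F|$, and that this same $F$ is independent in some $\XX$-matroid $M$. Both conclusions of Theorem~\ref{t:rank-rd} then follow.

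\textbf{Why this suffices.} Every $\XX$-matroid lies below some maximal $\XX$-matroid. Let $M'$ be a maximal $\XX$-matroid with $M \subseteq M'$. Since $F \in M'$ and $F \notin \RR^d_n$, we get $M' \neq \RR^d_n$. If $\RR^d_n$ is not maximal, it is not the unique maximal $\XX$-matroid. If it is maximal, then $M'$ is a second maximal one. Moreover $\rk_{\RR^d_n}(F) < |F| = \val_\XX(F)$, so $\val_\XX$ is not the rank function of $\RR^d_n$. Note that $\RR^d_n$ is an $\XX$-matroid: $K_{d+2}$ is a rigidity circuit, and for $d \ge 3$ the graph $K_{d+2,d+2}$ is a circuit by Bolker--Roth type results on bipartite frameworks.

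\textbf{Choosing $F$.} The natural candidate is a small graph that is a circuit of $\RR^d_n$ for a non-count reason, yet contains no copy of $K_{d+2}$ or $K_{d+2,d+2}$. Such a graph makes every proper $\XX$-sequence expensive. A concrete choice would be a complete bipartite graph $K_{a,b}$ with $a,b \le d+1$ and $|E| = ab > d(a+b) - \binom{d+1}{2}$. By Bolker--Roth, $K_{a,b}$ is dependent in $\RR^d_n$ when $a+b \ge \binom{d+2}{2}$ or under similar conditions. Alternatively we could take $K_{d+2,d+2}$ minus an edge together with a few extra edges, or a Bolker--Roth stress graph such as $K_{d+1,d+1}$ plus edges. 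The constraint $d \ge 5$ suggests that the numerics only work from $d=5$ on: we need $ab$ to be large while both parts stay at most $d+1$, so that $F$ avoids the circuits in $\XX$.

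\textbf{Step 1: $\val_\XX(F) = |F|$.} Take any proper sequence $\SS = (X_1,\dots,X_k)$. Each $X_i$ adds at least one new edge, but we need $|F \cup X_1 \cup \dots \cup X_k| - |F| \ge k$. We argue inductively that each $X_i$ adds at least one edge outside $F \cup X_1 \cup \dots \cup X_{i-1}$. A crude bound would count the new vertices or edges forced by each $K_{d+2}$ or $K_{d+2,d+2}$ that is not contained in $F$.

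\textbf{Step 2: constructing $M$.} Define $M$ as a truncation, or as the matroid whose circuits are the copies of $K_{d+2}$ and $K_{d+2,d+2}$ together with all sets of size $r+1$ for a suitable $r \ge |F|$. This is a paving-type matroid, and we must check the circuit elimination axiom. The cleanest route is to take $r$ equal to the size of the smallest member of $\XX$ minus one, but that gives $r = \binom{d+2}{2} - 1$, which may be smaller than $|F|$. Instead we would use the sparse paving construction with hyperplanes, which needs the members of $\XX$ to pairwise intersect in fewer than $r-1$ elements; that fails for large $n$. A more robust option is to let $M$ have rank function $\val_\XX$ restricted to the submodular part, or to use an elementary lift.

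\textbf{Main obstacle.} We expect the hard part to be reconciling Step~1 with Step~2, that is, producing an actual $\XX$-matroid in which $F$ is independent. The most promising route is to build $M$ from $\RR^d_n$ by a ``relaxation''. If $F$ is a circuit-hyperplane, or more generally a circuit that is a stressed but non-count configuration, then relaxing it preserves the matroid axioms and keeps every member of $\XX$ a circuit, provided no $X \in \XX$ equals $F$. We would therefore aim for $F$ to be a circuit-hyperplane of a suitable truncation of $\RR^d_n$, and check this via Step~1.
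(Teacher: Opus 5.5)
Your reduction is sound and has the same shape as the paper's argument: exhibit $F$ dependent in $\RR^d_n$ but independent in some $\XX$-matroid $M$. The gap is that the proposal supplies neither ingredient.

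\textbf{The choice of $F$.} Your one concrete candidate fails. Every $K_{a,b}$ with $a,b\le d+1$ is a subgraph of $K_{d+1,d+1}$, which is independent in $\RR^d_n$ for $d\ge 2$ (Table~\ref{t:circuits}). In particular, the Bolker--Roth condition $a+b\ge\binom{d+2}{2}$ cannot hold with $a,b\le d+1$ once $d\ge 3$. You gesture at ``$K_{d+1,d+1}$ plus edges'' but never pin down a graph.

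\textbf{The matroid $M$.} You discard the paving and truncation attempts yourself. ``$\val_\XX$ restricted to the submodular part'' is not a construction. The relaxation plan presupposes a circuit-hyperplane that you never exhibit.

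\textbf{Step~1.} This step is superfluous. Once $F\in M$ for an $\XX$-matroid $M$, Lemma~\ref{l:lower-bound} already gives $|F|=\rk_M(F)\le\val_\XX(F)$, which is all you use. Its induction is also flawed as written: it confuses properness, $X_i\not\subseteq X_1\cup\dots\cup X_{i-1}$, with $X_i\not\subseteq F\cup X_1\cup\dots\cup X_{i-1}$.

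\textbf{What is missing.} The right graph is $F=\cone(K_{d+1,d+1})$. It is a circuit of $\RR^d_n$ by coning (Corollary~\ref{c:cone-rbip}). It has $(d+1)(d+3)=(d+2)^2-1$ edges, so it contains no member of $\XX$.

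The paper obtains $M$ from a different abstract rigidity matroid, the cofactor matroid $\CC^d_n$, rather than by modifying $\RR^d_n$. In $\CC^d_n$:
\begin{enumerate}
\item $K_{d+2}$ is a circuit;
\item $K_{d+2,d+2}$ plus any edge is independent (Lemma~\ref{l:bipartite_plus_edge}, proved by $(0,d)$-, $(1,d)$- and $(2,d)$-extensions);
\item $F$ is independent, by cofactor coning (Lemma~\ref{l:cofactor-cone}) together with the independence of $K_{d+1,d+1}$ in $\CC^{d-1}_n$. This is exactly where $d\ge5$ enters.
\end{enumerate}
Intersecting with $\UU_{K_{d+2,d+2}}$ turns $K_{d+2,d+2}$ into a circuit. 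The compatibility Lemma~\ref{l:intersection} guarantees that $\CC^d_n\cap\UU_{K_{d+2,d+2}}$ is a matroid, and $F$ is independent in it by cardinality.

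\textbf{Salvaging your relaxation idea.} It can in fact be completed with this $F$. Put $r=|F|$ and truncate $\RR^d_n$ to rank $r$. Then $K_{d+2}$ stays a circuit. So does $K_{d+2,d+2}$, being a circuit of size exactly $r+1$. Relaxing $F$ would give an $\XX$-matroid with $F$ as a basis. However, $F$ is a circuit-hyperplane of the truncation only if $F$ is a flat of $\RR^d_n$. By coning, this reduces to $K_{d+1,d+1}$ being closed in $\RR^{d-1}_n$. That is a genuine Bolker--Roth type statement about the flexes of bipartite frameworks, and you would have to prove it. It fails for $d=4$, where $K_{5,5}$ is rigid in $\mathbb{R}^3$.
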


Again, it is still possible that $\RR_n^d$ could be described as the
unique maximal $\XX$-matroid for some finite set $\XX$ independent of $n$ as
well as it is possible that there exists a unique maximal $\{K_{d+2},
K_{d+2,d+2}\}$-matroid.

\paragraph{Outline.}
The remainder of the article is organized as follows. In Section~\ref{s:preliminaries} we review
some known matroidal constructions and the
necessary properties about the hyperconnectivity, rigidity and cofactor matroids.
In Section~\ref{s:xxmatroid} we review the properties of $\XX$-uniform matroids introduced in~\cite{jackson24-maximal}, and study their behavior when intersected with another matroid.
In Section~\ref{s:counter-umax} we present two counterexamples proving Theorem~\ref{t:rank-hd} and Theorem~\ref{t:rank-rd}.
\section{Properties of several abstract rigidity matroids}
\label{s:preliminaries}

\begin{table}
\begin{center}
  \begin{tabular}{cccc}
    & $\HH_n^d$ & $\RR_n^d$ & $\CC^d_n$\\
  $K_{d+2}$ & circuit & circuit & circuit\\
  $K_{d+1,d+1}$ & circuit & independent ($d\geq 2$) & independent ($d\geq 2$)\\
  $K_{d+2,d+2}$ & dependent & circuit ($d\geq 3$) & independent ($d\geq 4$) \\
  \end{tabular}
\end{center}
\caption{
  The statements in the table hold for $d\geq 1$, whenever $d$ is not specified.
  For $d=1$ all these matroids coincide with the cycle matroid~\cite{whiteley96-geometry}.
  The claims on $\HH_n^d$ are proved in~\cite{kalai85-hyperconnectivity}.
  The claims regarding $\RR_n^d$ for $d=2,3$ are proved in~\cite[Sections 2,3, and 9]{whiteley96-geometry},
  while for $d\geq 4$ are provided in~\cite[Examples 11.1.6, 11.2.4]{whiteley96-geometry}.
  The claims regarding $\CC_n^d$ for $d=2,3,4$ are given in~\cite[Sections 5, 10 and 11.3]{whiteley96-geometry},
  while for $d\geq 5$ are provided in Lemma~\ref{l:bipartite_plus_edge}.
}
  \label{t:circuits}
\end{table}

\paragraph{Hyperconnectivity matroid.}
We skip the technical definition of the hyperconnectivity matroid
from~\cite{kalai85-hyperconnectivity}; however, we point out several important
properties of this matroid, all of them proved
in~\cite{kalai85-hyperconnectivity}. Given two isomorphic graphs $G$ and $H$ on
at most $n$ vertices, $G$ and $H$ have the same rank in $\HH_n^d$,
i.e., $\HH_n^d$ is a \emph{symmetric matroid}~\cite{kalai90-symmetric}.
In particular, $G$ is a circuit in $\HH_n^d$ if and only if $H$ is a
circuit in $\HH_n^d$. In addition, the property of being a circuit does not depend
on $n$: (any isomorphic copy of) $G$ is a circuit in $\HH_n^d$ if any only if (any isomorphic copy of)
$G$ is a circuit in $\HH_m^d$ assuming that $m,n \geq |V(G)|$. It is known
that $K_{d+2}$ and $K_{d+1,d+1}$ are circuits in $\HH_n^d$ (for $n \geq 2d+2$);
see Table~\ref{t:circuits}. In particular, $\HH_n^d$ is a $\{K_{d+2},
K_{d+1,d+1}\}$-matroid.

Given a graph $G$, by $\cone(G)$ we denote the \emph{cone over $G$}
which is obtained from $G$ by adding a (new) vertex and connecting it with all
other vertices of $G$. For $k\geq 2$, by $\cone^k(G)$ we denote the
\emph{$k$-th cone} over $G$ defined iteratively as $\cone^{k-1}(\cone(G))$.
For ease of notation, we set $\cone^0(G)=G$.
The following corollary is a consequence of the fact that coning transforms circuits
of $\HH_n^d$ into circuits of $\HH_n^{d+1}$~\cite[Corollary 5.2]{kalai85-hyperconnectivity}.
\begin{corollary}
  \label{c:cone-hbip}
The cone $\cone^k(K_{d+1-k,d+1-k})$ is a circuit in $\HH_n^d$
for every $d\geq 1$ and $k=0,\dots,d$ (where $n$ is at least the size of the circuit).
\end{corollary}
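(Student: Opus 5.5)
The plan is to prove the corollary by induction on $k$, using Kalai's coning result~\cite[Corollary 5.2]{kalai85-hyperconnectivity}: if $G$ is a circuit in $\HH^{d'}_n$, then $\cone(G)$ is a circuit in $\HH^{d'+1}_n$. The statement is a direct consequence of this fact together with the base fact that $K_{m+1,m+1}$ is a circuit in $\HH^m_n$ for every $m\geq 1$ (Table~\ref{t:circuits}, proved in~\cite{kalai85-hyperconnectivity}). The independence of the circuit property from $n$, noted above, lets us ignore the exact value of $n$, as long as it is at least the number of vertices of the graph in question.

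Fix $d\geq 1$ and $0\leq k\leq d$, and set $m=d-k$. First treat $m\geq 1$. By induction on $j=0,\dots,k$ we show that $\cone^j(K_{m+1,m+1})$ is a circuit in $\HH^{m+j}_n$.
\begin{itemize}
\item For $j=0$ this is the base fact.
\item For the inductive step we use $\cone^{j+1}(G)=\cone(\cone^j(G))$. This holds by the iterative definition $\cone^{j}(G)=\cone^{j-1}(\cone(G))$ after a trivial induction showing that the two orders of iteration agree. We then apply Kalai's coning result to pass from $\HH^{m+j}$ to $\HH^{m+j+1}$.
\end{itemize}
Taking $j=k$ gives that $\cone^k(K_{d+1-k,d+1-k})$ is a circuit in $\HH^d_n$.

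The only delicate point is the extreme case $k=d$, where $m=0$ and the base graph is $K_{1,1}$, a single edge. Here "dimension $0$" is not covered by the stated facts, so I would avoid it. Instead, note that $\cone(K_{1,1})=K_3$, a circuit in $\HH^1_n$ since $\HH^1_n$ is the cycle matroid. Then $\cone^{d}(K_{1,1})=\cone^{d-1}(K_3)$, and the induction above starting from $j=1$ in dimension $1$ handles it; indeed $\cone^{d-1}(K_3)=K_{d+2}$, which is already known to be a circuit. Similarly, the requirement on $n$ only needs $n\geq 2(d+1-k)+k$ at each stage, which is the size of the final graph and dominates all intermediate sizes.
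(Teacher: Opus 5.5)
Your proposal is correct and matches the paper's argument: start from the circuit $K_{d+1-k,d+1-k}$ of $\HH^{d-k}_n$ and apply Kalai's coning result~\cite[Corollary 5.2]{kalai85-hyperconnectivity} $k$ times, using that the circuit property does not depend on $n$. Your separate treatment of $k=d$ via $\cone^{d}(K_{1,1})=\cone^{d-1}(K_3)=K_{d+2}$ is a sensible way to avoid invoking $\HH^0_n$, a point the paper's one-line justification passes over.
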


In particular, $\cone(K_{d,d})$ is a circuit in $\HH_n^d$. The core of
the proof of Theorem~\ref{t:rank-hd} is to build another $\{K_{d+2},
K_{d+1,d+1}\}$-matroid where $\cone(K_{d,d})$ is independent.

\paragraph{Rigidity matroid.}
We also skip the technical definition of the rigidity matroid $\RR_n^d$. It
shares many properties with the hyperconnectivity matroid. Two isomorphic
graphs have again the same rank in $\RR_n^d$. Likewise, $G$ is a circuit in
$\RR_n^d$ if any only if (any isomorphic copy of) $G$ is a circuit in $\RR_m^d$ 
assuming that $m,n \geq |V(G)|$, see~\cite{asimow78-rigidity}. 

For $d\geq 3$, the well known circuits in $\RR_n^d$
are $K_{d+2}$ and $K_{d+2,d+2}$ (for $n \geq 2d+4$); see Table~\ref{t:circuits}. 
From this we get that $\RR_n^d$ is a $\{K_{d+2}, K_{d+2,d+2}\}$-matroid.
Similarly as for the hyperconnectivity matroid, the following corollary is a consequence of the fact
that coning transforms circuits of $\RR_n^d$ into circuit in $\RR_n^{d+1}$~\cite[Theorem 5]{whiteley83-conerigidity}.

\begin{corollary}
  \label{c:cone-rbip}
The cone $\cone^k(K_{d+2-k,d+2-k})$ is a circuit in $\RR_n^d$
for every $d\geq 3$ and $k=0,\dots,d-3$ (where $n$ is at least the size of the circuit).
\end{corollary}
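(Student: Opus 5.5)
The plan is to argue by induction on $k$, using the fact quoted just before the statement: coning takes circuits of $\RR_n^{d'}$ to circuits of $\RR_n^{d'+1}$ \cite[Theorem 5]{whiteley83-conerigidity}. Throughout we use that being a circuit in $\RR^{d'}_m$ depends only on the isomorphism type of the graph, provided $m$ is at least its number of vertices. So we may freely change $n$ as long as $n$ is at least the number of vertices of the graph in question. That number is $2(d+2-k)+k$ for $\cone^k(K_{d+2-k,d+2-k})$.

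\textbf{Base case $k=0$.} Here the claim is that $K_{d+2,d+2}$ is a circuit in $\RR_n^d$ for $d\geq 3$, which is recorded in Table~\ref{t:circuits} and cited from \cite{whiteley96-geometry}.

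\textbf{Inductive step.} Fix $d\geq 3$ and $1\leq k\leq d-3$, and set $d'=d-1$, $k'=k-1$. Then
\[
d'+2-k' = d+2-k \qquad\text{and}\qquad 0\le k'\le d'-3 .
\]
The second condition is exactly $k\le d-3$, and it also forces $d'\ge 3$. By the induction hypothesis, $G=\cone^{k-1}(K_{d+2-k,d+2-k})$ is a circuit in $\RR^{d-1}_m$ for a suitable $m$. Applying the coning theorem once more, $\cone(G)=\cone^{k}(K_{d+2-k,d+2-k})$ is a circuit in $\RR^{d}_{m+1}$. By the $n$-independence of circuits, it is then a circuit in $\RR^d_n$ for every $n$ at least its size.

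It is cleaner to organize this as induction on $k$ with the statement quantified over all $d\geq k+3$; the index bookkeeping above is then exactly what is needed. Equivalently, one can start directly from $K_{d-k+2,d-k+2}$ being a circuit in $\RR^{d-k}$, which needs $d-k\geq 3$, i.e. $k\le d-3$, and then cone $k$ times.

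\textbf{Main obstacle.} The only delicate point is matching the precise hypotheses of Whiteley's coning theorem. That theorem is usually stated as: $G$ is independent (respectively, a circuit) in $d$-space if and only if $\cone(G)$ is independent (respectively, a circuit) in $(d+1)$-space, where the cone vertex is new and joined to all vertices of $G$. I would check that this is the version we invoke and that it applies to the generic matroid on the vertex set of $G$. Transferring back to $K_n$ with $n$ larger is then covered by the $n$-independence remark. The range $k\le d-3$ is dictated precisely by needing the base bipartite graph $K_{j+2,j+2}$ to be a circuit in dimension $j=d-k\geq 3$.
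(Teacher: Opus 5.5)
Your proposal is correct and matches the paper's argument. The paper obtains the corollary in one line: $K_{j+2,j+2}$ is a circuit in $\RR^j$ for $j\geq 3$ (Table~\ref{t:circuits}), and Whiteley's coning theorem, applied $k$ times with $j=d-k\geq 3$, turns it into $\cone^k(K_{d+2-k,d+2-k})$; your induction on $k$, using the $n$-independence of circuits to absorb the added cone vertices, is a careful write-up of exactly that argument.
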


In particular, $\cone(K_{d+1,d+1})$ is a circuit in $\RR_n^d$. Similarly as
in the case of the hyperconnectivity matroid, the core of
the proof of Theorem~\ref{t:rank-rd} is to build another $\{K_{d+2},
K_{d+2,d+2}\}$-matroid where $\cone(K_{d+1,d+1})$ is independent.

\paragraph{Cofactor matroid.}
We also skip the technical definition of the cofactor matroid $\CC^d_n$.
It shares many of the properties with the hyperconnectivity and rigidity matroids since
all of these are abstract rigidity matroids, e.g., the ranks of $\HH_n^d$, $\RR_n^d$
and $\CC_n^d$ are all equal to $dn - \binom{d+1}{2}$ and all have $K_{d+2}$ as a circuit.
It is also well behaved with respect to the cone operation:
\begin{lemma}[{\cite[Theorem 11.3.3]{whiteley96-geometry}}]
  \label{l:cofactor-cone}
  Let $G$ be a graph, then $G$ is independent in $\CC^d_n$ if and only if
  $cone(G)$ is independent in $\CC^{d+1}_n$ (provided that $n \geq |V(G)|+1$).
\end{lemma}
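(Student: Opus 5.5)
The plan is to work with self-stresses (linear dependencies among rows of the cofactor matrix) and build an explicit bijection between the self-stresses of $G$ in $\CC^d_n$ and those of $\cone(G)$ in $\CC^{d+1}_n$. This is the classical Whiteley-style coning argument.

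\textbf{Setup.} Recall the standard description of $\CC^d_n$. Put vertices at generic points $p_i=(x_i,y_i)$ in the plane. For an edge $ij$, let $\ell_{ij}=(x_i-x_j)X+(y_i-y_j)Y$ be a linear binary form. The row of $ij$ carries the $d$ coefficients of the degree-$(d-1)$ form $\ell_{ij}^{\,d-1}$ in the block of $i$, and their negatives in the block of $j$. Thus a self-stress is an assignment $\omega$ on the edges with $\sum_{j\sim i}\omega_{ij}\ell_{ij}^{\,d-1}=0$ at every vertex $i$. A graph is independent in $\CC^d_n$ iff it has no nonzero self-stress, and this holds for every sufficiently generic placement.

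\textbf{Moving the cone vertex to infinity.} The first step is to use projective invariance of the cofactor matroid. This is a known fact (the cofactor condition is preserved under projective maps, with appropriate rescaling of stresses), and it is the step I expect to be the main technical obstacle. I would verify it by rewriting the rows in homogeneous coordinates as $(p_i\times p_j)$-type forms. Granting it, we may place the cone vertex $0$ at the point at infinity in the direction $(0,1)$. Every line through $0$ is then vertical, so the cone edge $i0$ contributes a multiple of $Y^d$ in the block of $i$. The equilibrium at $0$ becomes a single condition that follows from the equilibria at all the other vertices, since the total sum of all rows vanishes for any stress. Meanwhile the points $p_i$ of $G$ stay generic, so in particular $a_{ij}:=x_i-x_j\neq 0$.

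\textbf{The correspondence.} Let $\omega'$ be a stress of $\cone(G)$ in $\CC^{d+1}_n$. At vertex $i$ it satisfies
\[
\sum_{j\sim i}\omega'_{ij}\ell_{ij}^{\,d}+\omega'_{i0}Y^d=0 .
\]
Apply $\partial/\partial X$. The term $Y^d$ dies and we get $\sum_j d\,a_{ij}\omega'_{ij}\ell_{ij}^{\,d-1}=0$. Hence $\omega_{ij}:=a_{ij}\omega'_{ij}$ is a stress of $G$ in $\CC^d_n$; it is symmetric because $a_{ij}$ is antisymmetric, and so is the sign convention of the rows.

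Conversely, let $\omega$ be a stress of $G$ and set $\omega'_{ij}=\omega_{ij}/a_{ij}$. The form $\sum_j\omega'_{ij}\ell_{ij}^{\,d}$ then has zero $X$-derivative, so it is a scalar multiple of $Y^d$. We choose $\omega'_{i0}$ to cancel this multiple, and the equilibrium at $0$ is automatic as noted above.

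Finally, a nonzero stress of the cone restricts to a nonzero $\omega$ on $G$. Indeed, if $\omega'$ vanished on all edges of $G$, then the equilibrium at each $i$ would read $\omega'_{i0}Y^d=0$, forcing $\omega'=0$. Hence $G$ has a nonzero stress in $\CC^d_n$ iff $\cone(G)$ has one in $\CC^{d+1}_n$, which is the claim. The assumption $n\ge |V(G)|+1$ only ensures that the cone fits in $K_n$, since independence does not depend on $n$.
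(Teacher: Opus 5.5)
The paper gives no proof of this lemma; it is quoted from Whiteley's survey (Theorem 11.3.3). Your argument is therefore a self-contained reconstruction rather than a variant of anything in the text.

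Your route is correct: send the cone vertex to the point at infinity in direction $(0,1)$ using projective invariance, then differentiate blockwise in $X$. It proves slightly more than the statement. The map $\omega'\mapsto (a_{ij}\omega'_{ij})_{ij\in E(G)}$ is a linear isomorphism between the stress space of $\cone(G)$ in $\CC^{d+1}_n$ and that of $G$ in $\CC^d_n$. This is because the kernel of $\partial_X$ on degree-$d$ forms is spanned by $Y^d$, and each vertex has exactly one cone edge to absorb it. Consequently the rank of $\cone(G)$ in $\CC^{d+1}_n$ exceeds that of $G$ in $\CC^d_n$ by exactly $|V(G)|$.

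The step you flagged goes through as you suggest. Take $L_{ij}=P_i\times P_j$, with rows $L_{ij}^d$ at $i$ and $-L_{ij}^d$ at $j$, viewed as ternary forms of degree $d$. A map $A\in GL_3$ sends $L_{ij}$ to a scalar multiple of $A^{-T}L_{ij}$. Hence it rescales each row and applies one and the same linear substitution of variables to every block. So your proof is more explicit than the citation, but not more elementary, since projective invariance is itself Whiteley's result.

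Three details need tightening.

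(i) With rows $\pm\ell_{ij}^{d-1}$, writing a dependency as $\sum_j\omega_{ij}\ell_{ij}^{d-1}=0$ forces $\omega_{ji}=(-1)^d\omega_{ij}$. So $\omega$ is not symmetric for odd $d$; for $d=1$, a symmetric $\omega$ would make a triangle independent. Your sentence about symmetry is therefore garbled. The clean fix is to work with oriented rows. Then $\partial_X$ sends the $\CC^{d+1}$-row of $e=(i,j)$ to $d(x_i-x_j)$ times its $\CC^d$-row and kills the $Y^d$ blocks of cone rows. Left-kernel vectors then correspond via $\lambda_e=a_e\lambda'_e$, with no parity discussion needed.

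(ii) Your ``single condition'' at $0$ is an artefact of taking limits of rescaled affine rows. In the homogeneous model with $P_0=(0,1,0)$, the block at $0$ of row $0i$ is $(X-x_iZ)^d$. Equilibrium at $0$ is then $d+1$ conditions. What your argument actually uses is that these conditions are implied by the other equilibria. That holds because the two blocks of every row cancel in the common space of ternary forms.

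(iii) You need that fixing $P_0$ at infinity with $p_1,\dots,p_m$ generic still realizes the generic rank of $\cone(G)$. The configurations of generic rank form a nonempty, Zariski-open, $PGL_3$-invariant set. Hence they meet the slice $\{P_0\}\times(\mathbb{P}^2)^m$ in a dense open set. You then intersect this with the open conditions that $G$ has generic $\CC^d$-rank and that $x_i\neq x_j$ for all $i\neq j$.
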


The matroids $\HH^d_n, \RR^d_n$ and $\CC^d_n$ in general differ if we look at their
circuits.
For example, it is known that the complete bipartite graph $K_{d+1,\binom{d+1}{2}}$ is dependent in
$\HH_n^d$ when $d\geq 2$, see Table~\ref{t:circuits}. However, this graph is independent in
$\CC_n^d$~\cite[Example 11.3.12]{whiteley96-geometry} for $d\geq 1$.

\begin{lemma}
\label{l:bipartite_plus_edge}
  \begin{enumerate}
    \item Let $d \geq 3, n \geq 2d+2$ and let $H$ be a subgraph of $K_n$
      obtained from $K_{d+1,d+1}$ by adding an edge (of $K_n$) to it. Then $H$ is independent in $\CC^d_n$.
    \item Let $d \geq 4, n \geq 2d+4$ and let $H$ be a subgraph of $K_n$
      obtained from $K_{d+2,d+2}$ by adding an edge (of $K_n$) to it. Then $H$ is independent in $\CC^d_n$.
  \end{enumerate}
\end{lemma}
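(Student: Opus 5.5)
The plan is to handle the trivial placements of the added edge $e$ first, and then, in the main case, reduce $H$ by deleting vertices of the larger side one at a time until it becomes a small graph that is obviously independent. Write the bipartition of the complete bipartite part as $A\cup B$. If $e$ has an endpoint outside $A\cup B$, then $H$ is obtained from $K_{d+1,d+1}$ (resp.\ $K_{d+2,d+2}$), which is independent in $\CC^d_n$ by Table~\ref{t:circuits}, by a $0$-extension or by adding a disjoint edge. Both operations preserve independence in any abstract rigidity matroid, $0$-extensions of degree at most $d$ in particular. So the main case is $e=a_1a_2$ with $a_1,a_2\in A$.

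For part 1 I would use that $d$-dimensional $1$-extensions preserve independence in $\CC^d_n$. I would quote this from Whiteley's treatment of cofactor matroids in~\cite{whiteley96-geometry}. The operation deletes an edge $xy$ and adds a new vertex of degree $d+1$ adjacent to $x$, $y$ and $d-1$ further vertices. Each $b\in B$ has degree exactly $d+1$ in $H$, with neighbourhood $A$. Deleting $b_1$ and adding any edge inside $A$ not yet present therefore realizes $H$ as a $1$-extension of a smaller graph of the same shape. Repeating this for $b_1,\dots,b_{d+1}$, I reduce to a graph on the vertex set $A$ with $d+2$ edges: $e$ plus one new edge per deleted vertex. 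Every graph on $d+1$ vertices is independent, since $K_{d+1}$ is independent (it has exactly $\binom{d+1}{2}=d(d+1)-\binom{d+1}{2}$ edges and all proper subgraphs of the circuit $K_{d+2}$ are independent). So the only requirement is that I never run out of non-edges inside $A$, that is, $d+2\le\binom{d+1}{2}$. This holds exactly when $d\ge3$, which matches the hypothesis and makes me fairly confident this is the intended route. Running the $1$-extensions forward gives the independence of $H$.

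For part 2 the vertices of $B$ have degree $d+2$, so plain $1$-extensions no longer apply, and I expect this to be the main obstacle. My first attempt would be a two-stage reduction. First delete a vertex $b\in B$ using a vertex-splitting or $X$-replacement type move, which Whiteley also establishes for cofactor matroids. Such a move would reduce $K_{d+2,d+2}+e$ to a graph of type $K_{d+2,d+1}$ plus a few edges inside $A$. Once $|B|$ is small relative to the added edges, the remaining $B$-vertices can be treated as above. If that fails, my fallback is a closure argument. Since $K_{d+2,d+2}$ is independent in $\CC^d_n$ for $d\ge4$, $H$ is dependent only if $a_1a_2\in\operatorname{cl}(K_{d+2,d+2})$. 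I would rule this out by exhibiting an independent superset of $K_{d+2,d+2}-b$ containing $a_1a_2$ and with the right number of edges, using part 1 together with Lemma~\ref{l:cofactor-cone} applied to a suitable cone structure. The edge count $|E(H)|=(d+2)^2+1\le d(2d+4)-\binom{d+1}{2}$ holds for $d\ge4$, so no counting obstruction arises there.
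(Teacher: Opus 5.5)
Your argument for part 1 is correct, and it takes a different route from the paper. The paper builds forward from $K_{d+2}$ minus an edge by $(0,d)$-extensions and a single $(1,d)$-extension. You instead peel off the vertices of $B$ by inverse $(1,d)$-extensions and end in a subgraph of $K_{d+1}$. The count $d+2\le\binom{d+1}{2}$ then explains the hypothesis $d\ge3$ transparently. One ordering caveat: for $d\ge5$ the paper obtains independence of $K_{d+1,d+1}$ and $K_{d+2,d+2}$ in $\CC^d_n$ from this very lemma (see the caption of Table~\ref{t:circuits}). So when $e$ leaves $A\cup B$, take independence of the bipartite graph from your main case (it is a subgraph of the graph handled there) rather than from the Table.

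Part 2, however, is not proved. You describe two strategies without carrying out either, and both break as stated.
\begin{itemize}
\item In the first, the step ``the remaining $B$-vertices can be treated as above'' fails. As long as $|A|=d+2$, every $b\in B$ is adjacent to all of $A$ and has degree exactly $d+2$, and deleting other vertices of $B$ does not change this. So no $B$-vertex can ever be the new vertex of a $(1,d)$-extension.
\item The fallback presupposes that $K_{d+2,d+2}$ is independent in $\CC^d_n$, which is circular for $d\ge5$. It also does not test the right thing. An independent set containing $(K_{d+2,d+2}-b)+a_1a_2$ does not rule out $a_1a_2\in\operatorname{cl}(K_{d+2,d+2})$, since the $d+2$ edges at $b$ are exactly what could create the dependency. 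You would need such a set inside $\operatorname{cl}(K_{d+2,d+2})$ with more than $(d+2)^2$ edges, and the only evident candidate is $H$ itself. The appeal to Lemma~\ref{l:cofactor-cone} is also never made concrete.
\end{itemize}

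The missing step is to use $(2,d)$-extensions (X-replacement, \cite[Theorem 11.4.1]{whiteley96-geometry}) for \emph{every} vertex of $B$ in your reduction: delete $b$ and insert two disjoint non-edges inside $A$. After all $d+2$ deletions you have $2d+5$ edges on the $d+2$ vertices of $A$. This is a proper subgraph of $K_{d+2}$ precisely when $2d+5\le\binom{d+2}{2}-1$, i.e.\ $d\ge4$. The $d+2$ pairs can be chosen pairwise edge-disjoint and avoiding $a_1a_2$. For $d=4$, use a $1$-factorization of $K_6$: take the two other edges of the matching through $a_1a_2$, two edges from each of the remaining four matchings, and two disjoint leftover edges. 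The paper proceeds differently. It extends its part-1 graph $G_4$, which was built with spare edges inside both sides, by one $(1,d)$-extension at $a_{d+2}$ and one $(2,d)$-extension at $b_{d+2}$.
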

We will need the first item of the lemma in the proof of
Theorem~\ref{t:rank-hd} in order to build a suitable $\{K_{d+2},K_{d+1,d+1}\}$-matroid.
Similarly, the second item will be used in the proof of Theorem~\ref{t:rank-rd}
in order to build a suitable $\{K_{d+2},K_{d+2,d+2}\}$-matroid.

For the proof of Lemma~\ref{l:bipartite_plus_edge} we need the definition of a $(k,d)$-extension of a graph.
For it, let us consider a pair of integers $0\leq k \leq d$, a graph $G$, a $k$-matching $\evec = \{e_1,\dots, e_k\}$ in $G$, i.e., a matching of size $k$,
and a set of vertices $V =\{v_1,\dots, v_{d-k}\}$ in $G$ disjoint from all
the edges in $\evec$.
The \emph{$(k,d)$-extension of $G$ (with respect to $\evec$ and $V$)} is a new graph $G'$ obtained from $G$
by adding to it a new vertex $u$, connecting $u$ to the endpoints of the edges appearing in the matching $\evec$ as well as
to the vertices in $V$, and finally removing from it the edges appearing in $\evec$.
It is known that the independent sets in $\CC_n^d$ are preserved after performing
$(0,d),(1,d)$ and $(2,d)$-extensions~\cite[Lemma 11.3.6, Theorem 11.3.10,
  Theorem 11.4.1]{whiteley96-geometry}.

\begin{proof}
 In both cases, let us assume that $H$ is obtained from the complete graph with
  parts $\{a_1, \dots, a_k\}$ and $\{b_1, \dots b_k\}$ where $k \in
  \{d+1,d+2\}$ by adding an edge $e$ to it. If $e$ meets the complete graph,
  without loss of generality, we can assume that it meets it in $a_1$ (and
  possibly $a_2$). 

  We start with the complete graph on $\{a_1,\dots,a_d, b_1,b_2\}$ minus the
  edge $\{a_{d-1},a_{d}\}$. This graph, call it $G_1$, is independent as the complete graph
  $K_{d+2}$ is a circuit in $\CC^d_n$

  We perform $(0,d)$-extensions in $G_1$ at vertices $b_3, \dots, b_d$
  connecting them to $a_1, \dots, a_d$, obtaining a graph $G_2$, independent in
  $\CC^d_n$. Next we perform one more $(0,d)$-extension at $a_{d+1}$,
  connecting it to $b_1,\dots, b_d$, obtaining a graph $G_3$, still independent
  in $\CC^d_n$. 

  Now we perform a $(1,d)$-extension in $G_3$ at the vertex $b_{d+1}$, connecting it
  to $a_1, \dots, a_{d+2}$ at the cost of removing the edge $a_1a_3$ from
  $G_3$. This way we obtain a graph $G_4$ independent in $\CC^d_n$.

 If we are in the case of the first item of the lemma, we are essentially done.
  If $e$ is the edge $a_1a_2$, then $H$ is a subgraph of $G_4$. If $e$ is not
  the edge $a_1a_2$, then we can perform further one or two $(0,d)$-extensions
  in vertices of $H$ and we obtain a supergraph of $H$ independent in
  $\CC^d_n$.

 It remains to consider the second item of the lemma, assuming $d \geq 4$. We
  perform one more $(1,d)$-extension at the vertex $a_{d+2}$, connecting it to
  $b_1, \dots, b_{d+1}$ while removing the edge $b_1b_2$, obtaining a graph
  $G_5$. Next, we perform a $(2,d)$-extension at the vertex $b_{d+2}$
  connecting it to $a_1, \dots, a_{d+2}$ while removing the edges $a_1a_4$ and
  $a_2a_3$, obtaining $G_6$ independent in $\CC^d_n$.

  Now we finish as in the previous case: If $e$ is the edge $a_1a_2$, then $H$
  is a subgraph of $G_6$. If $e$ is not
  the edge $a_1a_2$, then we can perform further one or two $(0,d)$-extensions
  in vertices of $H$ and we obtain a supergraph of $H$ independent in
  $\CC^d_n$.
\end{proof}

\begin{remark}
  As the reader might have noticed, if $d$ is large, the subgraph of $G_5$ (or
  $G_6$) induced by vertices $a_1, \dots, a_d$ still contains many edges (and
  pairs of disjoint edges). Let $G$ be $K_{d+2, d+ \binom d2/2}$ if $\binom d2$ is
  even and $G$ be $K_{d+2, d+ \lceil\binom d2/2\rceil}$ if $\binom d2$ is odd. 
  By a suitable strategy of applying
  $(2,d)$-extensions, starting with $G_5$, possibly followed by a single
  $(1,d)$-extension, it is possible to achieve that the graph $G$ is
  independent in $\CC^d_n$ for $d \geq 4$. This generalizes the well known fact that $K_{6,7}$ is
independent in $\CC_n^4$~\cite[Theorem 11.4.2]{whiteley96-geometry}.
  
  If, in addition, $n = 2d+2+
  \lceil\binom d2/2\rceil$, then $G$ is a basis (by comparing the number of
  edges of $G$ with the rank of $\CC^d_n$).
\end{remark}

\section{Intersecting a matroid with an $\XX$-uniform matroid}
\label{s:xxmatroid}

\paragraph{Matroid intersection.}
Let us denote by $\CC(M)$ the circuits of a matroid $M$.
We observe that if an intersection $N=M_1\cap M_2$ is a matroid,
then we can express its circuits and the rank function in terms
of $M_1$ and $M_2$ as follows.

\begin{lemma}
  \label{l:intersection-crank}
Let $M_1,M_2$ be matroids on the same ground set such that $N=M_1\cap M_2$ is a matroid as well. Then, $$\CC(N)= (\CC(M_1)\cap M_2) \cup (M_1\cap \CC(M_2))\cup (\CC(M_1)\cap \CC(M_2))$$ and $$r_N(X) = \min \{r_1(Y)+r_2(X\setminus Y)\colon Y\subseteq X\}.$$
\end{lemma}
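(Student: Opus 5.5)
We treat the two parts of Lemma~\ref{l:intersection-crank} separately. Throughout, $N = M_1\cap M_2$ is assumed to be a matroid: its independent sets are exactly the sets independent in both $M_1$ and $M_2$.

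\textbf{Circuits.} A set $C$ is a circuit of $N$ if and only if $C \notin M_1\cap M_2$ while $C\setminus\{x\}\in M_1\cap M_2$ for every $x\in C$.

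Suppose $C$ is such a set. Then every proper subset of $C$ is independent in both $M_1$ and $M_2$. We split into cases according to where $C$ itself is dependent.
\begin{itemize}
\item If $C\notin M_1$, then $C$ is a minimal dependent set of $M_1$, i.e.\ $C\in\CC(M_1)$.
\item Similarly, if $C\notin M_2$, then $C\in\CC(M_2)$.
\end{itemize}
Since $C$ is dependent in at least one of the two matroids, exactly one of three things happens. Either $C\in\CC(M_1)\cap M_2$, or $C\in M_1\cap\CC(M_2)$, or $C\in\CC(M_1)\cap\CC(M_2)$. Hence $C$ lies in the claimed union.

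Conversely, take $C$ in any of the three sets. In each case $C$ is dependent in $M_1$ or in $M_2$, so $C\notin N$. Every proper subset of $C$ is independent in $M_1$: if $C\in M_1$ this is immediate, and if $C\in\CC(M_1)$ it holds by minimality. The same argument shows every proper subset is independent in $M_2$. So every proper subset lies in $N$, and $C\in\CC(N)$. This direction is purely definitional.

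\textbf{Rank.} This is Edmonds' matroid intersection theorem applied to the restrictions $M_1|X$ and $M_2|X$. Since $N$ is a matroid, $r_N(X)$ equals the size of a largest set $I\subseteq X$ independent in both $M_1$ and $M_2$. Edmonds' theorem gives exactly
\[
\max\{|I| : I\subseteq X,\ I\in M_1\cap M_2\}=\min_{Y\subseteq X}\bigl(r_1(Y)+r_2(X\setminus Y)\bigr).
\]

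The inequality ``$\le$'' is easy and we prove it directly. For any common independent $I\subseteq X$ and any $Y\subseteq X$,
\[
|I| = |I\cap Y|+|I\setminus Y| \le r_1(Y)+r_2(X\setminus Y).
\]

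The reverse inequality is the substantive step, and it is the main obstacle. We would not reprove it; we would cite the matroid intersection theorem. If a self-contained argument were desired, the standard augmenting-path proof would do. Take a maximum common independent set $I$ and build the exchange graph. When no augmenting path exists, let $Y$ be the set of elements from which the $M_2$-free elements can be reached. Then $r_1(Y)+r_2(X\setminus Y)=|I|$.

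Note that the hypothesis that $N$ is a matroid is used only to identify $r_N(X)$ with the maximum size of a common independent subset of $X$.
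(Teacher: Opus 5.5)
Your proof is correct and follows the paper's argument. For circuits you use the same case analysis: every proper subset of $C$ lies in $N$, so $C$ is independent or a circuit in each $M_i$ but not independent in both. For the rank formula you, like the paper, cite Edmonds' matroid intersection theorem (the paper cites Oxley, Corollary 11.3.16); your direct proof of the easy inequality and the augmenting-path sketch for the reverse one are sound additions that the paper omits.
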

We remark that we do not really need a formula for the rank function in
the lemma above. We add it for completeness.
\begin{proof}
  Let $C\in \CC(N)$. Then, since $C\setminus x\in N$ for every $x\in C$, we
  have that $C$ is either a circuit or independent in $M_1$ and $M_2$.
  Vice versa, if $C$ is either a circuit or independent in $M_1$ and
  $M_2$, and, in addition, $C \not\in N$, then $C \in \CC(N)$.
  The given decomposition covers all the possible combinations.
  Now, let $X\subseteq E$ then its rank in $N$ is given by $$r_N(X) = \max \{|I| \colon I\in M_1|_X \cap M_2|_X\} = \min\{r_1(Y)+r_2(X\setminus Y) \colon Y\subseteq X\}$$ where the second equality follow from~\cite[Corollary 11.3.16]{oxley11-matroid}.
\end{proof}

\begin{corollary}
  Fix a ground set $E$ and $\XX_1,\XX_2$ finite families of subsets of $E$.
  Let $M_1$ an $\XX_1$-matroid, and $M_2$ an $\XX_2$-matroid such that $N=M_1\cap M_2$ is a matroid.
  Then, $\XX_i\subseteq M_j\cup \CC(M_j)$ for $i\neq j$ if and only if $N$ is an $(\XX_1\cup \XX_2)$-matroid.

  Moreover, if for $i=1,2$ the matroid $M_i$ is the unique maximal $\XX_i$-matroid for $i=1,2$,
  then $N$ is the unique maximal $(\XX_1\cup \XX_2)$-matroid.
\end{corollary}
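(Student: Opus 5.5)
The first part follows directly from the circuit description in Lemma~\ref{l:intersection-crank}. By that lemma, a set $C$ is a circuit of $N$ exactly when $C\notin N$ and $C\in M_1\cup\CC(M_1)$ and $C\in M_2\cup\CC(M_2)$. Here we use that $N=M_1\cap M_2$, so $C\notin N$ means $C$ is dependent in at least one $M_j$.

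For the forward direction, suppose $\XX_i\subseteq M_j\cup\CC(M_j)$ for $i\ne j$. Take $X\in\XX_1$. Then $X\in\CC(M_1)$, so $X\notin N$. Also $X\in M_1\cup\CC(M_1)$ trivially, and $X\in M_2\cup\CC(M_2)$ by hypothesis. Hence $X\in\CC(N)$. The symmetric argument applies to $X\in\XX_2$, so $N$ is an $(\XX_1\cup\XX_2)$-matroid.

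For the converse, suppose every $X\in\XX_1$ is a circuit of $N$. The lemma places $X$ in one of the three families, and in each of them $X\in M_2\cup\CC(M_2)$. The same reasoning gives $\XX_2\subseteq M_1\cup\CC(M_1)$.

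For the ``moreover'' part, I read the statement as implicitly keeping the hypothesis of the first part, so that $N$ is an $(\XX_1\cup\XX_2)$-matroid. Now let $M$ be any $(\XX_1\cup\XX_2)$-matroid. Then $M$ is an $\XX_1$-matroid, and by uniqueness and maximality of $M_1$ we get $M\subseteq M_1$. Here I use that a unique maximal element of the finite poset of $\XX_1$-matroids dominates every element. Likewise $M\subseteq M_2$, so $M\subseteq M_1\cap M_2=N$. Thus $N$ is the greatest $(\XX_1\cup\XX_2)$-matroid, hence the unique maximal one.

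The only subtle point is this hypothesis: without $\XX_i\subseteq M_j\cup\CC(M_j)$, $N$ might not be an $(\XX_1\cup\XX_2)$-matroid at all. I would state explicitly that the ``moreover'' clause assumes the equivalent conditions of the first part, and otherwise the argument is routine.
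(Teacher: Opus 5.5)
Your proof is correct and follows the paper's argument: the equivalence is read off from the circuit description in Lemma~\ref{l:intersection-crank}, and the ``moreover'' part uses the same domination argument $N'\subseteq M_1\cap M_2=N$. Your explicit extra hypothesis in the ``moreover'' part is a genuine improvement, because the paper's proof only shows that every $(\XX_1\cup\XX_2)$-matroid lies in $N$ and never checks that $N$ is one. Without that hypothesis the claim fails, e.g.\ for $E=\{a,b\}$, $\XX_1=\{\{a\}\}$, $\XX_2=\{\{a,b\}\}$; the hypothesis could be weakened to the mere existence of some $(\XX_1\cup\XX_2)$-matroid, since its containment in each $M_j$ forces $\XX_i\subseteq M_j\cup\CC(M_j)$.
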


\begin{proof}
  The matroid $N$ is an $(\XX_1\cup \XX_2)$-matroid if and only if $\XX_1\cup \XX_2 \subseteq \CC(N)$.
  By Lemma~\ref{l:intersection-crank} this last condition holds if and only if
  $$\XX_i\subseteq (\CC(M_1)\cap M_2) \cup (M_1\cap \CC(M_2))\cup (\CC(M_1)\cap \CC(M_2)).$$
  Since $\XX_i\cap M_i =\emptyset$ and $\XX_i \subseteq \CC(M_i)$, we have that $\XX_1\cup \XX_2 \subseteq \CC(N)$ holds if and only if
  $\XX_i \subseteq M_j \cup \CC(M_j)$, as wanted.
 
  Now, assume that $M_i$ is the unique maximal $\XX_i$ matroid for $i =
  1,2$. Let $N'$ be an $(\XX_1\cup \XX_2)$-matroid. In particular, $N'$ is a $\XX_i$-matroid
  and consequently, by uniqueness and maximality of $M_i$, we have that $N'\subseteq M_i$ for $i=1,2$.
  From this it follows that $N'\subseteq M_1\cap M_2 = N$.
\end{proof} 

The intersection of two matroid as defined in this section is not always matroid.
Nash-Williams introduced the following closely related construction.
For two matroids $M_1,M_2$ on $E$ the family $M_1\vee M_2 = \{I_1\cup I_2 \colon I_1\in M_1, I_2\in M_2\}$
is again a matroid, see~\cite[Theorem 11.3.1]{oxley11-matroid}.
By matroid duality we obtain that $M_1\wedge M_2 = (M_1^*\vee M_2^*)^*$ is a matroid as well.
We observe that the inclusion $M_1\wedge M_2 \subseteq M_1\cap M_2$ holds in general.
Indeed, let $B$ be a basis of $M_1\wedge M_2$, then $B=E\setminus B^*$ with $B^*$ a basis of $M_1^*\vee M_2^*$.
Since $B^*$ is independent in $M_1^*\vee M_2^*$ then $B^*=I_1\cup I_2$ for $I_j\in M_j^*$.
For $j=1,2$, let $I_j\subseteq B_j^*$ be a basis in $M_j^*$. Then, $B^*\subseteq B^*_1\cup B^*_2 \subseteq B^*$ where the last inclusion
follows from the fact that $B^*$ is a maximal independent set in $M_1^*\vee M_2^*$.
Since $B_j^* = E\setminus B_j$ with $B_j$ a basis in $M_j$ we have that $B = B_1\cap B_2$.
However, the reverse inclusion $M_1\wedge M_2 \supseteq M_1\cap M_2$ does not need to be satisfied in general.
For it, consider $\UU_2$ the uniform matroid of rank $2$. We have that $\UU_2\cap \UU_2 = \UU_2$, while $\UU_2\wedge \UU_2 = \{\emptyset\}$ is the matroid whose only independent set is the empty set.

\paragraph{Intersection of $\XX$-uniform matroids.}
Jackson and Tanigawa~\cite{jackson24-maximal} introduced the notion of a $\XX$-uniform matroid.
For it, let $\XX$ be a finite uniform family of (finite) sets, i.e., all of its  members have the same size $k\geq 1$,
and set $E = \bigcup_{X \in \XX} X$.
We define the $\XX$-uniform system as
\[
  \UU_\XX = \{ F \subseteq E \colon |F| \leq k \hbox{ and there is no } X \in
  \XX \hbox{ with } X \subseteq F\}.
\]

We say that the family $\XX$ as above is \emph{union-stable} if for every $X_1,
X_2 \in \XX$ with $X_1 \neq X_2$ and $e \in X_1 \cap X_2$ either $|(X_1 \cup X_2) - e| > k$ or
there exist $X\in \XX$ such that $X \subseteq (X_1\cup X_2) - e$.\footnote{Thinking of $\XX$ as a set of circuits, these
conditions are essentially the circuit exchange axiom.
}

\begin{lemma}[{\cite[Lemma 2.5]{jackson24-maximal}}]
\label{l:union-stable}
 Let $\XX$ be a uniform family of sets. Then, $\XX$ is union-stable if and only if $\UU_\XX$ is a matroid.
\end{lemma}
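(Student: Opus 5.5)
We prove the two directions separately. Throughout, $\XX$ is uniform of size $k$, $E=\bigcup_{X\in\XX}X$, and $\UU_\XX$ is the family of sets of size at most $k$ containing no member of $\XX$. First note that $\UU_\XX$ always satisfies the first two independence axioms. It contains $\emptyset$ because $k\geq 1$ and every member of $\XX$ is nonempty. It is closed under subsets because both defining conditions are inherited by subsets. So $\UU_\XX$ is a matroid exactly when it satisfies the augmentation axiom. A cleaner route is to identify its would-be circuits. These are the members of $\XX$, which are minimal dependent because every proper subset has size less than $k$ and cannot contain a $k$-set. The other minimal non-members are the $(k+1)$-subsets of $E$ that contain no member of $\XX$. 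Call this family $\mathcal{D}$. We then check the circuit elimination axiom for $\XX\cup\mathcal{D}$.

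($\Leftarrow$) Assume $\XX$ is union-stable. We verify the circuit axioms for $\mathcal{C}=\XX\cup\mathcal{D}$. Incomparability holds: members of $\XX$ are pairwise incomparable since they have equal size, and no member of $\mathcal{D}$ contains a member of $\XX$. For elimination, take distinct $C_1,C_2\in\mathcal{C}$ and $e\in C_1\cap C_2$; we need a member of $\mathcal{C}$ inside $(C_1\cup C_2)-e$. Since $C_1\neq C_2$ and the two sets have sizes in $\{k,k+1\}$, we get $|C_1\cup C_2|\geq k+1$, hence $|(C_1\cup C_2)-e|\geq k$. If $|(C_1\cup C_2)-e|\geq k+1$, then any $(k+1)$-subset of $(C_1\cup C_2)-e$ either contains a member of $\XX$ or lies in $\mathcal{D}$; either way we find a circuit. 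In the remaining case $|(C_1\cup C_2)-e|=k$, so $|C_1\cup C_2|=k+1$. This forces both $C_i$ to have size $k$, since otherwise one would equal the union and contain the other. So $C_1,C_2\in\XX$, and union-stability (the alternative $|(X_1\cup X_2)-e|>k$ fails) gives $X\in\XX$ with $X\subseteq (C_1\cup C_2)-e$. The independent sets of the resulting matroid are the sets containing no member of $\mathcal{C}$. These are precisely the sets of size at most $k$ containing no member of $\XX$, since any set of size at least $k+1$ with no member of $\XX$ contains a member of $\mathcal{D}$. Hence this matroid is $\UU_\XX$.

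($\Rightarrow$) Assume $\UU_\XX$ is a matroid. Then each $X\in\XX$ is a circuit of it, by the minimality observation above. Take $X_1\neq X_2$ in $\XX$ and $e\in X_1\cap X_2$, and suppose $|(X_1\cup X_2)-e|\leq k$. Strong enough is plain circuit elimination: there is a circuit $C\subseteq (X_1\cup X_2)-e$. Every circuit of $\UU_\XX$ has size $k$ or $k+1$, and only the members of $\XX$ have size at most $k$. Therefore $C\in\XX$, which is exactly union-stability.

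The main obstacle is the bookkeeping in the ($\Leftarrow$) direction: correctly identifying the circuits of $\UU_\XX$ and handling the size case $|C_1\cup C_2|=k+1$, which is the only place where the union-stability hypothesis is genuinely needed. Everything else is routine size counting.
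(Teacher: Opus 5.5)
Your proof is correct. The paper gives no proof of its own here: the lemma is quoted from Jackson and Tanigawa \cite[Lemma 2.5]{jackson24-maximal} and used as a black box. Only the direction ``union-stable $\Rightarrow$ $\UU_\XX$ is a matroid'' is actually invoked, in Example~\ref{ex:bipartite} and the examples that follow it. So your argument is a self-contained verification rather than a rival to an in-paper proof.

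Going through the circuit axioms is a good choice. Identifying the minimal non-members of $\UU_\XX$ as $\XX\cup\mathcal{D}$ shows that $\UU_\XX$, when it is a matroid, is a paving matroid. It also isolates the single nontrivial instance of circuit elimination: two members $X_1,X_2\in\XX$ with $|X_1\cup X_2|=k+1$. In that case $(X_1\cup X_2)-e$ has exactly $k$ elements, so union-stability says exactly that $(X_1\cup X_2)-e\in\XX$. The converse direction is then circuit elimination applied to this same configuration. A direct check of the augmentation axiom, in the style of the paper's proof of Lemma~\ref{l:intersection}, is also possible but less transparent.

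The only points you leave implicit are trivial. First, $\emptyset\notin\XX\cup\mathcal{D}$, because $k\geq 1$. Second, members of $\mathcal{D}$ are pairwise incomparable, because they all have size $k+1$.
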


\begin{example}
  \label{ex:bipartite}
  Let $d\geq 2$, $n\geq 2d$, and $\XX=\{K_{d,d}\}$ be given by all the isomorphic copies of $K_{d,d}$ in $K_n$.
  We claim that $\UU_{K_{d,d}} \coloneqq \UU_\XX$ is a matroid. For it, we verify that the family $\XX$ is union-stable.
  Indeed, any two distinct element $X_1,X_2\in \XX$ either share all the vertices or not. In the former
  case there are at least two new edges while in the latter the number of edges increases at least by $d$.
  In both cases $|(X_1\cup X_2) \setminus e| > d^2$ for every $e\in X_1\cap X_2$.
\end{example}

Let $M$ be a matroid with the ground set $E$, a family $\XX$ of subsets of $E$
is \emph{compatible with} $M$ if for every $X\in \XX$, $e\in X$ and $f\in E$,
the set $(X\setminus e)\cup f \in M$.

\begin{lemma}
  \label{l:intersection}
  Let $M$ be a matroid with ground set $E$, and $\XX$
  a uniform family of subsets of $E$, all of whose elements have the same size $k\geq 1$, such that $\UU_\XX$ is a matroid.
  If $\XX$ is
  compatible with $M$, then $N=M\cap \UU_\XX$ is a matroid.
\end{lemma}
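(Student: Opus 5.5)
We verify the independence axioms for $N = M\cap \UU_\XX$. Since $M$ and $\UU_\XX$ are both matroids, $N$ contains $\emptyset$ and is closed under taking subsets. It remains to check the augmentation axiom. Let $I, J \in N$ with $|I| < |J|$; we must find $f \in J\setminus I$ such that $I\cup f \in N$.

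First apply augmentation in $M$. The set
\[
  A = \{ f \in J\setminus I \colon I\cup f\in M\}
\]
is nonempty. If some $f\in A$ also satisfies $I\cup f\in \UU_\XX$, we are done. So assume, for contradiction, that for every $f\in A$ the set $I\cup f$ fails to be in $\UU_\XX$.

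This failure can happen in two ways. Either $|I\cup f| > k$, or $I\cup f$ contains some $X\in\XX$. The first is impossible, because $|I| < |J| \le k$ gives $|I\cup f|\le k$. Hence for every $f\in A$ there is $X_f\in\XX$ with $X_f\subseteq I\cup f$, and necessarily $f\in X_f$, since $I\in\UU_\XX$. A size count now pins things down. We have $|X_f| = k$ and $|I\cup f|\le k$, so $|I| = k-1$, $|J| = k$, and $X_f = I\cup f$ exactly.

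Now use compatibility. Fix $f\in A$, so that $X_f = I\cup f\in\XX$. For any $e\in X_f$ and any $g\in E$, compatibility gives $(X_f\setminus e)\cup g \in M$. Taking $e = f$ yields $I\cup g\in M$ for every $g\in E$, and therefore $A = J\setminus I$. Consequently every $g\in J\setminus I$ satisfies $I\cup g = X_g \in \XX$.

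The remaining step is to use union-stability (Lemma~\ref{l:union-stable}) to reach a contradiction with $J\in\UU_\XX$. If $|J\setminus I| = 1$, then $J = I\cup g = X_g\in\XX$, which contradicts $J\in\UU_\XX$. Otherwise take distinct $g,h\in J\setminus I$. Then $X_g\ne X_h$, and $X_g\cup X_h = I\cup\{g,h\}$ has size $k+1$. For $e\in I = X_g\cap X_h$, union-stability forces some $X\in \XX$ with $X\subseteq (I\setminus e)\cup\{g,h\}$, since the size of that set is exactly $k$.

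We then want to iterate this to show that $J$ contains a member of $\XX$. We replace elements of $I\setminus J$ one at a time by elements of $J\setminus I$, using union-stability together with the fact that $\UU_\XX$ is a matroid. Equivalently, the $k$-sets in $\XX$ are exactly the circuits of size $k$ in the matroid $\UU_\XX$, and the members of $\XX$ are the $k$-subsets that are dependent in $\UU_\XX$.

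I expect this last step to be the main obstacle. The cleanest approach is to argue inside the matroid $\UU_\XX$. There, $I$ is independent of size $k-1$, and every $g\in J\setminus I$ lies in the closure of $I$, because $I\cup g$ is a circuit. Hence $J\setminus I\subseteq \mathrm{cl}_{\UU_\XX}(I)$, and so $J\subseteq \mathrm{cl}(I)$. This gives $\rk_{\UU_\XX}(J)\le k-1 < |J|$, contradicting that $J$ is independent in $\UU_\XX$. This contradiction shows that some $f\in A$ satisfies $I\cup f\in N$, so augmentation holds and $N$ is a matroid.
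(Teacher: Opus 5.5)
Your proof is correct and follows essentially the paper's argument: augment in $M$, note that the only obstruction is $I\cup f\in\XX$ with $|I|=k-1$, then combine compatibility with the matroid property of $\UU_\XX$. Your closure/rank contradiction is just the augmentation axiom of $\UU_\XX$ applied to $I,J$, which the paper invokes directly to pick $f$ before using compatibility to get $I\cup f\in M$; the union-stability paragraph is an unnecessary detour and can be deleted.
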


\begin{proof}
  We proceed by verifying the independent set axioms for $N$.
  The family of sets $N$ is not empty since both $M$ and $\UU_\XX$ contain the emptyset.
  Let $I_1,I_2\in N$ with $|I_1|< |I_2|$.
  Since $M$ is a matroid there exists $e\in I_2\setminus I_1$ such that
  $I_1\cup e\in M$. If $I_1\cup e\in \UU_\XX$ we are done.
  Assume that this is not the case, then by definition of $\UU_\XX$ either $|I_1\cup e| > k$
  or there exists $X\in \XX$ such that $X\subseteq I_1\cup e$.
  The former option is not possible since $|I_1\cup e| \leq |I_2| \leq k$.
  
  In the latter case we obtain that $|X|\leq |I_1\cup e| \leq |I_2|\leq |X|$ and consequently $X=I_1\cup e$,
  i.e., $I_1 = X\setminus e$.
  Because $\UU_\XX$ is a matroid, there exists $f\in I_2\setminus I_1$
  such that $I_1\cup f \in \UU_\XX$. We notice that $f\neq e$ since we assumed
  that $I_1 \cup e \notin \UU_\XX$. 
  Since $\XX$ is compatible with $M$ we can conclude that $(X\setminus e) \cup f = I_1\cup f$ is independent in $M$,
  and so it is independent in $N$ as well.
\end{proof}

\begin{example}
  \label{ex:bip-comp-small}
  For $d\geq 3$ and $n\geq 2d+2$, let $\XX = \{K_{d+1,d+1}\}$ denote the family of all the
  isomorphic copies of $K_{d+1,d+1}$ in $K_n$. We claim that $\CC_n^d \cap \UU_{K_{d+1,d+1}}$ is
  a matroid. From Example~\ref{ex:bipartite} we already know that $\UU_{K_{d+1,d+1}}$ is a matroid.
  By Lemma~\ref{l:bipartite_plus_edge}, $\XX$ is compatible with
  $\CC_n^d$. Consequently, Lemma~\ref{l:intersection} implies that $\CC_n^d \cap \UU_{K_{d+1,d+1}}$ is
  a matroid.
\end{example}

\begin{example}
  \label{ex:bip-comp}
  For $d\geq 4$ and $n\geq 2d+4$, let $\XX = \{K_{d+2,d+2}\}$ denote the family of all the
  isomorphic copies of $K_{d+2,d+2}$ in $K_n$. We claim that $\CC_n^d\cap \UU_{K_{d+2,d+2}}$ is a matroid.
  Similarly as above, it follow from Example~\ref{ex:bipartite} that $\UU_{K_{d+2,d+2}}$ is a matroid. 
  By Lemma~\ref{l:bipartite_plus_edge}, $\XX$ is compatible with
  $\CC_n^d$. Consequently, Lemma~\ref{l:intersection} implies that $\CC_n^d \cap \UU_{K_{d+2,d+2}}$ is
  a matroid.
\end{example}

\section{Counterexamples to the unique maximal candidates}
\label{s:counter-umax}

\begin{proof}[Proof of Theorem~\ref{t:rank-hd}]
  Let $d\geq 3$ and $\XX=\{K_{d+2},K_{d+1,d+1}\}$.  
  We show that $\HH_n^d$ is not the unique $\XX$-maximal matroid.
  For it, let $N = \CC_n^d\cap \UU_{K_{d+1,d+1}}$. It follows from Example~\ref{ex:bip-comp-small} that $N$ is a matroid.\\
  {\bf Claim: $N$ is an $\XX$-matroid.}
  By Lemma~\ref{l:intersection-crank} it follows that $N$ is an $\XX$-matroid since $K_{d+2}\in \CC(\CC^d_n)\cap \UU_{K_{d+1,d+1}}$ and
  $K_{d+1,d+1}\in \CC_n^d\cap \CC(\UU_{K_{d+1,d+1}})$.\\
  {\bf Claim: there is a maximal $\XX$-matroid containing $N$.}
  The poset of all $\XX$-matroids containing $N$ is not empty since
  $N$ is in it. Set $N'$ to be a maximal element in this poset.\\
  {\bf Claim: $N'$ is different from $\HH_n^d$.}
  Now, for $d\geq 3$ the graph $K_{d,d}$ is independent in $\CC_n^{d-1}$~\cite[Example 11.3.12]{whiteley96-geometry}. 
  By Lemma~\ref{l:cofactor-cone} we can conclude that $G = \cone(K_{d,d})$ is
  independent in $\CC_n^d$.
  By cardinality, $G$ is independent in $\UU_{K_{d+1,d+1}}$, and so it is independent in $N \subseteq N'$.
  However, by Corollary~\ref{c:cone-hbip}, the graph $G$ is dependent in $\HH_n^d$ and
  consequently $N'\neq \HH_n^d$. It follows that $\HH_n^d$ is not
  the unique maximal $\XX$-matroid.

  Finally, by~\cite[Lemma~1.2]{jackson24-maximal}, the function $\val_\XX$ is not the rank function $\HH_n^d$ since
  otherwise $\HH_n^d$ would be the unique maximal $\XX$-matroid.
\end{proof}

\begin{proof}[Proof of Theorem~\ref{t:rank-rd}]
  Let $d\geq 5$ and $\XX = \{K_{d+2},K_{d+2,d+2}\}$.
  We show that $\RR_n^d$ is not the unique $\XX$-maximal matroid.
  For it, let $N = \CC_n^d\cap \UU_{K_{d+2,d+2}}$. It follows from Example~\ref{ex:bip-comp} that $N$ is a matroid.\\
  {\bf Claim: $N$ is an $\XX$-matroid.}
  By Lemma~\ref{l:intersection-crank} it follows that $N$ is an $\XX$-matroid
  since $K_{d+2}\in \CC(\CC_n^d)\cap \UU_{K_{d+2,d+2}}$ and, by Lemma~\ref{l:bipartite_plus_edge}, $K_{d+2,d+2}\in \CC_n^d\cap \CC(\UU_{K_{d+2,d+2}})$.\\
  {\bf Claim: there is a maximal $\XX$-matroid containing $N$.}
  The poset of all $\XX$-matroids containing $N$ is not empty since
  $N$ is in it. Set $N'$ to be a maximal element in this poset.\\
  {\bf Claim: $N'$ is different from $\RR_n^d$.}
  Now, for $d\geq 5$ the graph $K_{d+1,d+1}$ is independent in $\CC_n^{d-1}$, see Table~\ref{t:circuits}.
  By Lemma~\ref{l:cofactor-cone} it follows that $G=\cone(K_{d+1,d+1})$ is independent in $\CC_n^d$.
  By cardinality, $G$ is independent in $\UU_{K_{d+2,d+2}}$, and so it is independent in $N \subseteq N'$.
  However, by Corollary~\ref{c:cone-rbip} the graph $G$ is a circuit in $\RR_n^d$ and consequently $N'\neq \RR_n^d$.
  It follows that $\RR_n^d$ is not the unique maximal $\XX$-matroid.

  Finally, by~\cite[Lemma~1.2]{jackson24-maximal}, the function $\val_\XX$ is not the rank function $\RR_n^d$ since
  otherwise $\RR_n^d$ would be the unique maximal $\XX$-matroid.
\end{proof}

\section{Questions}

\paragraph{Circuits in the hyperconnectivity matroid.}
We disproved Conjecture~\ref{c:hyperconnectivity} due to the fact
that $\cone(K_{d,d})$ is a circuit in $\HH_n^d$ while we found a $\{K_{d+2},
K_{d+1,d+1}\}$-matroid where $\cone(K_{d,d})$ is independent. This suggests that
the list of prescribed circuits in $\XX$ order to get $\HH_n^d$ as a unique
maximal $\XX$-matroid is incomplete. Maybe, it is possible to fix
Conjecture~\ref{c:hyperconnectivity} by adding circuits from
Corollary~\ref{c:cone-hbip}:

\begin{question}
  Let $d\geq 1$ and $\XX=\{\cone^k(K_{d+1-k,d+1-k})\colon k
  \in\{0,\dots,d\}\}$. Is it true that $\HH_n^d$ is the
unique maximal $\XX$-matroid and $\val_\XX$ is its rank function?
\end{question}

We remark that $\XX$ above contains, in particular, $K_{d+2}$ and $K_{d+1,d+1}$
by setting $k$ to $d$ and $0$ respectively. We also remark that, in principle,
it might be even possible to remove some elements from $\XX$ above in order to
answer the question affirmatively. For example, for $d=1$, we get $\XX = \{K_3,
K_{2,2}\}$. However, $\HH_1^n$ is already a unique maximal $K_3$-matroid (as we
mention in the introduction). In general, it may, for example, turn out that it
is not necessary to include $\cone^{d-1}(K_{2,2})$. However, we include all cones in $\XX$ for a convenient formulation. 

\paragraph{Rank of $\{K_{d+2},K_{d+1,d+1}\}$-matroids.}
In the proof of Theorem~\ref{t:rank-hd}, we show that $N = \CC^d_n \cap
\UU_{K_{d+1,d+1}}$ is a $\{K_{d+2},K_{d+1,d+1}\}$-matroid different from
$\HH_n^d$.  
\begin{question}
\label{q:extend_hyper}
  Is $\CC^d_n \cap \UU_{K_{d+1,d+1}}$ contained in a $\{K_{d+2},K_{d+1,d+1}\}$-matroid of rank $dn -  \binom{d+1}2$?
\end{question}
In order to motivate the question above, let us remark that both, the
affirmative or the negative answer could have interesting consequences.

If the answer is affirmative, then this could yield a construction of a new
abstract $d$-rigidity family, as discussed, for example, in~\cite{tyomkyn26}.

If the answer is negative, then we get an even stronger counterexample to
Conjecture~\ref{c:hyperconnectivity}: There is no unique maximal
$\{K_{d+2},K_{d+1,d+1}\}$-matroid. Indeed, let us consider a maximal
$\{K_{d+2},K_{d+1,d+1}\}$-matroid $N'$ containing $N$ and a maximal
$\{K_{d+2},K_{d+1,d+1}\}$-matroid $H'$ containing $\HH^d_n$. Then $N'$ and $H'$
differ as the rank of $N'$ is less than $dn -  \binom{d+1}2$ while the rank of
$H'$ is at least the rank of $H$ which is $dn -
\binom{d+1}2$; see~\cite{kalai85-hyperconnectivity}.

In addition, if the answer is negative, there is a hope to save the maximality
part of Conjecture~\ref{c:hyperconnectivity} by adding a restriction on the
rank:
  
\begin{question}[\cite{tyomkyn26pc}]
  Let $d\geq 1$ and $\XX=\{K_{d+2},K_{d+1,d+1}\}$. Is it true that $\HH_n^d$ is
  the unique maximal $\XX$-matroid among $\XX$-matroids of rank $dn -
\binom{d+1}2$?
\end{question}

\paragraph{Analogous questions for the rigidity matroid.} For all the questions
above, there are analogous questions for the rigidity matroid. Given that the
motivation is essentially the same, we only pose the questions. (In the case of
the first question, we have to be a bit careful regarding the list of circuits
we have.)

\begin{question}
  Let $d\geq 1$ and $\XX=\{\cone^k(K_{d+2-k,d+2-k})\colon k
  \in\{0,\dots,d-3\}\} \cup \{K_{d+2}\}$. Is it true that $\RR_n^d$ is the
unique maximal $\XX$-matroid and $\val_\XX$ is its rank function?
\end{question}

\begin{question}
  Let $d \geq 4$. Is $\CC^d_n \cap \UU_{K_{d+2,d+2}}$ contained in a $\{K_{d+2},K_{d+2,d+2}\}$-matroid of rank $dn -  \binom{d+1}2$?
\end{question}

\begin{question}[\cite{tyomkyn26pc}]
  Let $d\geq 3$ and $\XX=\{K_{d+2},K_{d+2,d+2}\}$. Is it true that $\RR_n^d$ is
  the unique maximal $\XX$-matroid among $\XX$-matroids of rank $dn -
\binom{d+1}2$?
\end{question}

\section*{Acknowledgment}
We would like to thank Mykhaylo Tyomkyn for a fruitful discussion regarding
open questions.

\bibliographystyle{alpha}
\bibliography{main}

\begin{thebibliography}{CJT22b}

\bibitem[AR78]{asimow78-rigidity}
L.~Asimow and B.~Roth.
\newblock The rigidity of graphs.
\newblock {\em Trans. Amer. Math. Soc.}, 245:279--289, 1978.

\bibitem[AR79]{asimow79-rigidity}
L.~Asimow and B.~Roth.
\newblock The rigidity of graphs. {II}.
\newblock {\em J. Math. Anal. Appl.}, 68(1):171--190, 1979.

\bibitem[ASW93]{alfeld93-cofactor}
P.~Alfeld, L.~L. Schumaker, and W.~Whiteley.
\newblock The generic dimension of the space of {$C^1$} splines of degree
  {$d\geq 8$} on tetrahedral decompositions.
\newblock {\em SIAM J. Numer. Anal.}, 30(3):889--920, 1993.

\bibitem[Ber17]{bernstein17-2hyper}
D.~I. Bernstein.
\newblock Completion of tree metrics and rank 2 matrices.
\newblock {\em Linear Algebra Appl.}, 533:1--13, 2017.

\bibitem[CJT22a]{clinch22-rigidity1}
K.~Clinch, B.~Jackson, and S.~Tanigawa.
\newblock Abstract 3-rigidity and bivariate {$C^1_2$}-splines {I}: {W}hiteley's
  maximality conjecture.
\newblock {\em Discrete Anal.}, pages Paper No. 2, 50, 2022.

\bibitem[CJT22b]{clinch22-rigidity2}
K.~Clinch, B.~Jackson, and S.~Tanigawa.
\newblock Abstract 3-rigidity and bivariate {$C^1_2$}-splines {II}:
  {C}ombinatorial characterization.
\newblock {\em Discrete Anal.}, pages Paper No. 3, 32, 2022.

\bibitem[Glu75]{gluck74-rigidity}
H.~Gluck.
\newblock Almost all simply connected closed surfaces are rigid.
\newblock In {\em Geometric topology ({P}roc. {C}onf., {P}ark {C}ity, {U}tah,
  1974)}, Lecture Notes in Math., Vol. 438, pages 225--239. Springer,
  Berlin-New York, 1975.

\bibitem[Gra91]{graver91-abstractrig}
J.~E. Graver.
\newblock Rigidity matroids.
\newblock {\em SIAM J. Discrete Math.}, 4(3):355--368, 1991.

\bibitem[JT24]{jackson24-maximal}
B.~Jackson and S.~Tanigawa.
\newblock Maximal matroids in weak order posets.
\newblock {\em J. Combin. Theory Ser. B}, 165:20--46, 2024.

\bibitem[Kal85]{kalai85-hyperconnectivity}
G.~Kalai.
\newblock Hyperconnectivity of graphs.
\newblock {\em Graphs Combin.}, 1(1):65--79, 1985.

\bibitem[Kal90]{kalai90-symmetric}
G.~Kalai.
\newblock Symmetric matroids.
\newblock {\em J. Combin. Theory Ser. B}, 50(1):54--64, 1990.

\bibitem[Max64]{maxwell64-rigidity}
J.~C. Maxwell.
\newblock L{.} on the calculation of the equilibrium and stiffness of frames.
\newblock {\em The London, Edinburgh, and Dublin Philosophical Magazine and
  Journal of Science}, 27(182):294--299, 1864.

\bibitem[Oxl11]{oxley11-matroid}
J.~Oxley.
\newblock {\em Matroid theory}, volume~21 of {\em Oxford Graduate Texts in
  Mathematics}.
\newblock Oxford University Press, Oxford, second edition, 2011.

\bibitem[Tyo26a]{tyomkyn26pc}
M.~Tyomkyn, 2026.
\newblock Private communication.

\bibitem[Tyo26b]{tyomkyn26}
M.~Tyomkyn.
\newblock On plane rigidity matroids.
\newblock {\em arXiv preprint arXiv:2602.11892}, 2026.

\bibitem[Whi83]{whiteley83-conerigidity}
W.~Whiteley.
\newblock Cones, infinity and {$1$}-story buildings.
\newblock {\em Structural Topology}, (8):53--70, 1983.
\newblock With a French translation.

\bibitem[Whi88]{whiteley88-cofactor}
W.~Whiteley.
\newblock The union of matroids and the rigidity of frameworks.
\newblock {\em SIAM J. Discrete Math.}, 1(2):237--255, 1988.

\bibitem[Whi91]{whiteley91-cofactor}
W.~Whiteley.
\newblock The combinatorics of bivariate splines.
\newblock In {\em Applied geometry and discrete mathematics}, volume~4 of {\em
  DIMACS Ser. Discrete Math. Theoret. Comput. Sci.}, pages 587--608. Amer.
  Math. Soc., Providence, RI, 1991.

\bibitem[Whi96]{whiteley96-geometry}
W.~Whiteley.
\newblock Some matroids from discrete applied geometry.
\newblock In {\em Matroid theory ({S}eattle, {WA}, 1995)}, volume 197 of {\em
  Contemp. Math.}, pages 171--311. Amer. Math. Soc., Providence, RI, 1996.

\end{thebibliography}

\end{document}